\newtheorem{theorem}{Theorem}[section]
\newtheorem{lemma}[theorem]{Lemma}
\newtheorem{proposition}[theorem]{Proposition}
\newtheorem{example}[theorem]{Example}
\newcommand\ds{\displaystyle}
\newcommand{\Hil}{\mathcal H}
\newcommand{\R}{\mathbb R}
\newcommand{\C}{\mathbb C}
\newcommand{\N}{\mathbb N}
\newcommand{\Mn}{M_n(\C)}
\newcommand{\calA}{\mathcal A}
\newcommand{\lip}{\left<}
\newcommand{\rip}{\right>}
\newcommand{\tr}{\operatorname{trace}}
\newcommand{\Log}{\operatorname{Log}}
\def\conj#1{\overline{#1}}
\title[Inner Products]{Inner Products on the Space of \\ Complex Square Matrices}
\author{Rub\'en A. Mart\'inez-Avenda\~no}
\author{Josu\'e I. Rios-Cangas}
\address{Centro de Investigaci\'on en Matem\'aticas,  Universidad
  Aut\'onoma del Estado de Hidalgo, Ciudad del Conocimiento,
  Carr.~Pachuca--Tulancingo, km 4.5, Mineral de Reforma, Hidalgo, 42184, Mexico}
\email{rubeno71@gmail.com}
\email{bbk\_matikl@hotmail.com}
\keywords{inner products, positivity of transformations}
\subjclass[2010]{15A60,15A63,15A04}
\thanks{The authors would like to thank the referee for the careful
  revision of the paper. The first author would like to thank Jaime
  Cruz-Sampedro: his suggestion of finding examples of norms on the
  space of complex square matrices led to this investigation.}
\begin{document}

\begin{abstract}
In this paper we study the problem of finding explicit expressions for
inner products on the space of complex square matrices $\Mn$. We show that,
given an inner product $\lip \cdot, \cdot \rip$ on $\Mn$, with some
conditions, there exist positive matrices $A_j$ and $B_j \in \Mn$,
for $j=1, 2\dots, m$ such that
$$
\lip X, Y \rip = \sum_{j=1}^m \tr\left(Y^* A_j X B_j \right),
$$
for all $X, Y \in \Mn$. However, we show that the result does not hold for all inner
products. In fact, if the above expression does not hold, we show that
there exist positive matrices $A_j$ and $B_j \in \Mn$, for $j=1,
2\dots, m$ such that
$$
\lip X, Y \rip = -\tr\left(Y^* A_1 X B_1 \right)+ \sum_{j=2}^m
\tr\left(Y^* A_j X B_j \right),
$$
for all $X, Y \in \Mn$.
\end{abstract}

\maketitle 

\section{Introduction}

It is a well-known fact, and a very useful one, that on a finite
dimensional normed vector space all norms are equivalent. Of course,
that does not mean that all norms are equally easy to use. Among the
most easy-to-use norms are those induced by an inner product. 

The space $\Mn$ of all $n \times n$ matrices with complex entries is a vector space and
one can give several norms on it: some which are induced by an inner product,
some which are not. For example, the operator norm is not induced by an
inner product, which can be easily checked by observing that the
parallelogram law does not hold. On the other hand, the inner product
defined by
$$
\lip X, Y \rip = \tr(Y^* X),
$$
for $X, Y \in \Mn$ induces a norm called the Frobenius or Euclidean
norm given by
$$
\| X \|^2 = \tr(X^*X)=\sum_{i=1}^n \sum_{j=1}^n |x_{i,j}|^2
$$
where $X=\begin{pmatrix} x_{i,j} \end{pmatrix}$. Here $\tr$ denotes
the trace of the given matrix and $Y^*$ denotes the conjugate-transpose of $Y$.

A natural question arises: are there any other inner products on the
vector space of square complex matrices? It is not hard to show that
if we choose $m \in \N$ and positive matrices $A_i$, $B_i$ with $i=1, 2, \dots, m$ then
$$
\lip X, Y \rip_* = \tr \left( \sum_{i=1}^m Y^* A_i X B_i \right)
$$
defines an inner product. Are there any others?

The purpose of this paper is to show that, given an inner product
$\lip \cdot, \cdot \rip_*$ on $\Mn$, there exists $m \in \N$ and
positive matrices $A_i$, $B_i$ with $i=1, 2, \dots, m$ such that
$$
\lip X, Y \rip_* = \tr\left( - Y^* A_1 X B_1+ \sum_{i=2}^m Y^* A_i X B_i\right).
$$
The appearance of the minus sign in the first term is unsettling but,
nevertheless, unavoidable. We will give an example (a restatement of
an example shown to us by D.~Grinberg in \cite{grinberg}) that shows it is
not always possible to get rid of the minus sign. We also give a
theorem that guarantees that, under some conditions, given an inner product
$\lip \cdot, \cdot \rip_*$ on $\Mn$, there exists $m \in \N$ and
positive matrices $A_i$, $B_i$ with $i=1, 2, \dots, m$ such that
$$
\lip X, Y \rip_* = \tr\left(\sum_{i=1}^m Y^* A_i X B_i\right).
$$

We do not know if the above results are known or not. Maybe they can be
deduced from some heavy machinery. Nevertheless, we strive here
to give elementary proofs of our results.

The paper is organized as follows. In Section~\ref{sec:pre} we
introduce some results that we will use throughout the
paper. In Section~\ref{sec:her} we show that the matrices $A_i$ and
$B_i$ that define the inner product can always be chosen to be
Hermitian. In Section~\ref{sec:pos} we show that we can always choose
the matrices to be positive, except perhaps for the appearance of one
minus sign.

\section{Preliminaries}\label{sec:pre}

Recall that $\Mn$ is a Hilbert space under the inner product given by
$$
\lip X, Y \rip = \tr(Y^* X).
$$
We will assume throughout this paper that inner products are linear in
the first entry and conjugate-linear in the second entry. The same
assumption will be made about sesquilinear forms.

Given a bounded sesquilinear form $\phi: \Hil \times \Hil \to \C$ on a
Hilbert space $\Hil$, there exists (see~\cite[Theorem~II.2.2]{conway})
a unique bounded linear operator $\calA : \Hil \to \Hil$ such that
$$
\phi(x,y)=\lip \calA x, y \rip.
$$

Since any given inner product $\langle \cdot , \cdot  \rangle_*$ on
$\Mn$ is a bounded sesquilinear form on $\Mn$, it follows that said
inner product is necessarily of the form 
$$
\langle X, Y \rangle_* = \lip \calA(X), Y \rip = \tr(Y^*\calA(X)),
$$
with $\calA : \Mn \to \Mn$ a (bounded) linear transformation.

Also, it is well-known that, for any linear transformation $\calA :
\Mn \to \Mn$, there exist $m \in \N$ with $m \leq n^2$, and matrices $E_i,
F_i \in \Mn$, for $i=1, 2, \dots, m$, such that
$$
\calA(X) = \sum_{i=1}^m E_i X F_i.
$$
It is easy to see that the sets $\{E_1, E_2, \dots,E_m \}$ and $\{F_1,
F_2, \dots, F_m \}$ can be assumed linearly independent.

Therefore, the problem of characterizing the inner products on $\Mn$
reduces to the problem of characterizing all matrices $E_i, F_i \in
\Mn$ such that the linear transformation $\calA : \Mn \to \Mn$ defined
by
$$
\calA(X) = \sum_{i=1}^m E_i X F_i
$$
gives rise to a linear product of the form
$$
\langle X, Y \rangle_* = \lip \calA(X), Y \rip = \tr(Y^*\calA(X)).
$$

We start with results we will need throughout this paper. We think of
vectors $x \in \C^n$ as column vectors and thus $x^*$ is a row vector. Recall that
a matrix $A \in \Mn$ is said to be Hermitian if $A=A^*$. The matrix
$A$ is said to be positive if it satisfies $x^* A x >0$ for all nonzero $x \in
\C^n$. Equivalently, $A$ is positive if it is Hermitian and all of its
eigenvalues are positive. Also, recall that if $A$ and $B$ are
positive matrices, and if we denote by $\lambda(t)$ the smallest
eigenvalue of the matrix $A-tB$, then $\lambda(t)$ is a continuous
function of $t \in \R$. Also, for large enough $t$, the matrix $A-tB$ has a
negative eigenvalue. Hence, by the Intermediate Value Theorem, there
exists $t_0>0$ such that $A-tB$ is positive for all $t\in [0,t_0)$ and
$A-t_0B$ has a zero eigenvalue.

Also, it is well known that the set of positive matrices is an open
subset of the Hermitian matrices. Hence if $A$ is positive and $B$ is
Hermitian, there exists $\varepsilon >0$ such that $A+tB$ is positive
for all real numbers $t$ with $|t|<\varepsilon$.  We will use all the
facts above without further discussion.

An important property of the trace which we will use is that
$\tr(AB)=\tr(BA)$ and hence $\tr(ABC)=\tr(BCA)=\tr(CAB)$, as long as
the matrix multiplications make sense. Incidentally, $\tr(ABC)$ does
not have to be equal to $\tr(BAC)=\tr(CBA)=\tr(ACB)$.

The following easy lemma will be useful.

\begin{lemma}\label{le:tra_pos}
Let $A$ and $B$ be positive matrices in $\Mn$. Then
$$
\tr(X^* A X B) >0
$$
for all nonzero $X \in \Mn$.
\end{lemma}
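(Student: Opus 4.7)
The plan is to reduce the quantity $\tr(X^* A X B)$ to a manifestly nonnegative expression, namely the squared Frobenius norm of some matrix, and then argue that this norm is zero only when $X=0$.

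First I would use the fact that a positive matrix has a positive (and hence invertible) square root. Writing $A = C^*C$ and $B = D^*D$ with $C$ and $D$ invertible (one may in fact take $C = A^{1/2}$ and $D = B^{1/2}$), the expression becomes
$$
\tr(X^* A X B) = \tr(X^* C^* C X D^* D).
$$
Next I would apply the cyclic property of the trace, recalled in the Preliminaries, to move the factor $D$ to the front:
$$
\tr(X^* C^* C X D^* D) = \tr(D X^* C^* C X D^*) = \tr\bigl((CXD^*)^* (CXD^*)\bigr),
$$
where the last equality uses $(CXD^*)^* = D X^* C^*$. This is precisely $\|CXD^*\|_F^2$ in the Frobenius norm introduced in the Introduction, so it is automatically nonnegative.

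Finally, to obtain strict positivity for $X \neq 0$, I would observe that $\|CXD^*\|_F^2 = 0$ forces $CXD^* = 0$, and since $C$ and $D^*$ are invertible (as $A$ and $B$ are positive, their square roots are invertible), we may multiply by $C^{-1}$ on the left and $(D^*)^{-1}$ on the right to conclude $X = 0$. There is no real obstacle here; the only point that requires a brief justification is the invertibility of the square roots, which follows immediately from the characterization of positivity in terms of strictly positive eigenvalues recalled in the Preliminaries.
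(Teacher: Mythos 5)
Your proof is correct, but it takes a different route from the paper. You factor $A=C^*C$ and $B=D^*D$ via (invertible) square roots, cycle the trace, and recognize $\tr(X^*AXB)=\tr\bigl((CXD^*)^*(CXD^*)\bigr)=\|CXD^*\|_F^2$, which vanishes only when $CXD^*=0$, i.e.\ only when $X=0$ by invertibility. The paper instead unitarily diagonalizes both matrices, $A'=U^*AU=\mathrm{diag}(\lambda_i)$ and $B'=V^*BV=\mathrm{diag}(\mu_j)$, sets $Y=U^*XV$, and computes the trace entrywise as $\sum_{i,j}\lambda_i\mu_j|y_{i,j}|^2>0$. Both arguments rest on the spectral theorem (your square roots are built from the same diagonalization), so neither is more elementary in substance; your version is coordinate-free and shorter, identifying the quantity as a single Frobenius norm, while the paper's version exhibits the trace explicitly as a sum of strictly positive terms without needing to invoke square roots or their invertibility. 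The one step you should make explicit is the standard fact that a positive matrix has an invertible (indeed positive) square root, which follows from the characterization of positivity via strictly positive eigenvalues recalled in the Preliminaries; with that noted, your argument is complete.
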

\begin{proof}
Since $A$ and $B$ are positive, there exist unitary matrices $U$ and
$V$ such that $A':=U^*AU$ and $B'=V^*B V$ are diagonal matrices, say
$A'=\mathrm{diag}(\lambda_1, \lambda_2, \dots, \lambda_n)$ and 
$B'=\mathrm{diag}(\mu_1, \mu_2, \dots, \mu_n)$ with $\lambda_i >0 $
and $\mu_i>0$ for all $i=1, 2, \dots, n$.

Then for every nonzero $X \in \Mn$ we have
\begin{align*}
\tr(X^*AXB)
&=\tr(X^*UA'U^* X V B'V^*) \\
&=\tr(A' (U^* X V) B' (V^* X^* U))\\
&=\tr(A'Y B' Y^*),
\end{align*}
where $Y= U^* X V$ (note that $Y\neq 0$). But if we denote the entries
of $Y$ by $\begin{pmatrix} y_{i,j} \end{pmatrix}$, a simple computation shows that
$$
\tr(X^*AXB) = \tr(A'Y B' Y^*) = \sum_{i,j=1}^n \lambda_i \mu_j
|y_{i,j}|^2 > 0,
$$
and hence $\tr(X^*AXB) > 0$.
\end{proof}

The following lemma will allow us to show the positivity of the
matrices defining the operator $\calA$.

\begin{lemma}
Let $\calA : \Mn \to \Mn$ be defined by
$$
\calA(X)=\sum_{i=1}^m E_i X F_i.
$$
Then,
$$
\lip \calA (x y^*), x y^* \rip= \sum_{i=1}^m  (x^* E_i x)(y^* F_i y)
$$
for all $x, y \in \C^n$. 
\end{lemma}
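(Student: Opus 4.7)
The plan is to compute $\lip \calA(xy^*), xy^* \rip$ directly from the definition of the inner product on $\Mn$ and the formula for $\calA$, using only the cyclicity of the trace together with the elementary observation that $x^*E_i x$ and $y^* F_i y$ are scalars (hence can be pulled out of any trace).

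First I would unfold everything. By definition of the Frobenius inner product and linearity,
$$
\lip \calA(xy^*), xy^* \rip = \tr\!\left( (xy^*)^* \calA(xy^*) \right) = \sum_{i=1}^m \tr\!\left( yx^* E_i xy^* F_i \right),
$$
using $(xy^*)^* = yx^*$ and the linearity of the trace.

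Next I would isolate the scalar factor $x^* E_i x$. Since $x^* E_i x \in \C$, I can write $yx^* E_i x y^* F_i = (x^* E_i x)\, y y^* F_i$, so each summand becomes $(x^* E_i x)\, \tr(y y^* F_i)$. Now by the cyclic property of the trace, $\tr(y y^* F_i) = \tr(y^* F_i y) = y^* F_i y$. Substituting this back gives
$$
\lip \calA(xy^*), xy^* \rip = \sum_{i=1}^m (x^* E_i x)(y^* F_i y),
$$
as claimed.

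There is essentially no obstacle here: the statement is a one-line computation dressed up as a lemma. The only thing to be a little careful with is not to invert the order inside the trace incorrectly (recall the remark in the Preliminaries that $\tr(ABC)=\tr(BCA)$ but in general not $\tr(BAC)$), which is why I factor out the scalar $x^* E_i x$ before applying cyclicity, rather than trying to cycle the whole product $yx^* E_i x y^* F_i$ in one go.
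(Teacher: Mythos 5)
Your computation is correct and is essentially the paper's own proof: both unfold the inner product to $\tr(yx^*E_ixy^*F_i)$ and use cyclicity of the trace plus the fact that $x^*E_ix$ and $y^*F_iy$ are scalars (the paper cycles the vector $y$ to the end, you factor out the scalar first — an immaterial difference). No gaps.
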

\begin{proof}
Let $E, F \in \Mn$, and define $X:=x y^* \in \Mn$. Then,
\begin{align*}
\lip E X F, X \rip  
&= \tr( X^* E X F)\\
&= \tr( y x^* E x y^* F)\\
&= \tr( x^* E x y^* F y)\\
&= (x^* E x)(y^* F y).
\end{align*}
Applying this result to each term of the form $\lip E_i X F_i, X \rip$ the result
follows by linearity.
\end{proof}

The following result is just a well-known result about inner product spaces. We
record here the statement for later reference.

\begin{proposition}\label{prop:ortho}
If $\tr(Y^* A)=\tr(Y^*B)$ for all $Y \in \Mn$, then $A=B$.
\end{proposition}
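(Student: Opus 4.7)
The plan is to reduce to the statement that the Frobenius inner product $\lip \cdot, \cdot \rip$ is positive definite, and then exploit a clever choice of $Y$. Set $C := A - B$. By linearity of the trace, the hypothesis rewrites as $\tr(Y^* C) = 0$ for every $Y \in \Mn$, i.e.\ $\lip C, Y \rip = 0$ for all $Y$.

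From here the shortest path is to specialize $Y := C$. This gives $\tr(C^* C) = 0$. But $\tr(C^* C) = \sum_{i,j} |c_{i,j}|^2$ where $C = (c_{i,j})$, so this sum of nonnegative real numbers vanishes only when every entry is zero, forcing $C = 0$ and hence $A = B$. Alternatively one can invoke the fact, already implicit in the preliminaries, that $\lip \cdot, \cdot \rip$ is an inner product on $\Mn$ and therefore its associated form is nondegenerate; the orthogonal complement of all of $\Mn$ is $\{0\}$, so $C = 0$.

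There is no real obstacle here; the only thing to be slightly careful about is the convention that $\lip X, Y \rip = \tr(Y^* X)$ is conjugate-linear in the second argument, so the choice $Y = C$ (rather than, say, $Y = C^*$) is the one that produces $\tr(C^* C)$ and not some other quantity. With that convention fixed in the preliminaries, the argument is a one-line verification.
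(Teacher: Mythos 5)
Your proof is correct and is exactly the standard argument the paper relies on: the paper simply records this proposition as a well-known fact about inner product spaces (nondegeneracy of $\lip X, Y \rip = \tr(Y^*X)$) without writing out a proof, and your specialization $Y = C = A - B$ giving $\tr(C^*C) = \sum_{i,j}|c_{i,j}|^2 = 0$ is the usual one-line verification of that fact, with the conjugate-linearity convention handled correctly.
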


In Section~\ref{sec:her} we will need the existence of square roots of
matrices that satisfy certain properties. The following theorem
takes care of that. Recall that if $A \in \Mn$ is given by
$A=\begin{pmatrix} a_{i,j} \end{pmatrix}$ we define $\conj{A}$ as
the matrix $\conj{A}=\begin{pmatrix} \conj{a_{i,j}} \end{pmatrix}$.

\begin{theorem}\label{th:coninv}
Let $C \in \Mn$ be such that $C \conj{C}=I$. Then there exists a
matrix $D \in \Mn$ such that $D^2=C$ and $D \conj{D}=I$.
\end{theorem}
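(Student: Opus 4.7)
The plan is to construct $D$ as a square root of $C$ via the holomorphic functional calculus, using a branch of the square root chosen compatibly with the conjugation-symmetry $\conj{C}=C^{-1}$, and then to establish the required identity $D\conj{D}=I$ at the level of holomorphic functions so that functional calculus transports it to matrices.

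First I would note that $C\conj{C}=I$ forces $\conj{C}=C^{-1}$, so $C$ is invertible and its spectrum $\sigma(C)$ is a finite subset of $\C\setminus\{0\}$. I would then pick $\alpha\in\R$ so that the ray $R_\alpha:=\{re^{i\alpha}:r\geq 0\}$ misses $\sigma(C)$, which is possible because $\sigma(C)$ is finite. Let $f:\C\setminus R_\alpha\to\C$ denote the branch of the square root given by $f(re^{i\theta})=\sqrt{r}\,e^{i\theta/2}$ for $\theta\in(\alpha,\alpha+2\pi)$, and set $D:=f(C)$ via the holomorphic functional calculus. Then $D$ is a polynomial in $C$, and $D^2=C$ follows automatically from $f(z)^2=z$.

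To verify $D\conj{D}=I$, I would define $\conj{f}(w):=\conj{f(\conj{w})}$ and introduce the auxiliary function
$$
g(z)\;:=\;f(z)\,\conj{f}(1/z),
$$
observing first that the domain conditions for $f(z)$ and for $\conj{f}(1/z)$ both reduce to $z\notin R_\alpha$, so that $g$ is holomorphic on $\C\setminus R_\alpha\supset\sigma(C)$. A direct computation with $z=re^{i\theta}$ and $\theta\in(\alpha,\alpha+2\pi)$ gives $f(z)=\sqrt{r}\,e^{i\theta/2}$ and $\conj{f}(1/z)=(1/\sqrt{r})\,e^{-i\theta/2}$, whose product is $1$; hence $g\equiv 1$ on the whole domain. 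Applying the holomorphic functional calculus, together with the polynomial identity $\conj{f}(\conj{X})=\conj{f(X)}$ and the compatibility of functional calculus with composition by $z\mapsto 1/z$, I would conclude
$$
I\;=\;g(C)\;=\;f(C)\,\conj{f}(C^{-1})\;=\;f(C)\,\conj{f}(\conj{C})\;=\;f(C)\,\conj{f(C)}\;=\;D\,\conj{D}.
$$

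The hard part will be handling the case where $C$ has nontrivial Jordan blocks, because then the functional calculus of $g$ at $C$ genuinely depends on the germ of $g$ (i.e., on derivatives up to the size of each Jordan block) at each eigenvalue; a pointwise check $g(\mu)=1$ on $\sigma(C)$ would not be enough. The reason the argument succeeds is precisely that the direct computation yields the stronger identity $g\equiv 1$ on the open set $\C\setminus R_\alpha$, and so $g(C)=I$ follows with no further analysis needed for Jordan blocks.
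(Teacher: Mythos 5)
Your proof is correct, but it takes a different route from the paper. The paper invokes the primary matrix logarithm: by Horn--Johnson (Theorem 6.4.20) there is a $\Log(C)$ with $e^{\Log(C)}=C$ and $\Log\left(\conj{C^{-1}}\right)=-\conj{\Log(C)}$; since $\conj{C}=C^{-1}$ this forces $\Log(C)+\conj{\Log(C)}=0$, i.e.\ $\Log(C)$ is purely imaginary, and then $D=e^{\frac12\Log(C)}$ satisfies $D^2=C$ and $D\conj{D}=e^{\frac12\Log(C)}e^{-\frac12\Log(C)}=I$. You instead bypass the logarithm entirely: you take a branch $f$ of the square root on $\C\setminus R_\alpha$ with the ray chosen off the (finite, nonzero) spectrum, set $D=f(C)$, and reduce $D\conj{D}=I$ to the scalar identity $g(z)=f(z)\,\conj{f}(1/z)\equiv 1$ on the open set $\C\setminus R_\alpha$, transported to matrices by multiplicativity, the composition property for $z\mapsto 1/z$ (valid since $\sigma(C^{-1})$ avoids $R_{-\alpha}$), and the conjugation rule $\conj{f}(\conj{X})=\conj{f(X)}$. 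Your closing remark about Jordan blocks is exactly the right point: because $g-1$ vanishes identically on an open neighborhood of $\sigma(C)$, all the derivative data entering the functional calculus vanish too, so no case analysis is needed. The trade-off: the paper's argument is shorter but rests on a nontrivial cited property of the primary logarithm, whereas yours is self-contained modulo the standard algebraic properties of the holomorphic functional calculus and makes the branch choice explicit; both constructions produce $D$ as a (polynomial) function of $C$.
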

\begin{proof}
Given that $C$ is invertible there exists a primary matrix function
$\Log(C)$ such that $\Log\left(\conj{C^{-1}}\right)=-\conj{\Log(C)}$
and with $e^{\Log(C)}=C$ (see~\cite[Theorem 6.4.20]{HJ}). Since
$\conj{C}=C^{-1}$ it follows that
$$
\Log(C)+\conj{\Log(C)}=0
$$
and hence that $\Log(C)$ has purely imaginary entries.

Define $D=e^{\frac{1}{2} \Log(C)}$. Since $\Log(C)$ is purely
imaginary, then $\conj{D}=e^{-\frac{1}{2} \Log(C)}$. Hence $D
\conj{D}=e^{\frac{1}{2} \Log(C)}e^{-\frac{1}{2} \Log(C)}=I$ and
$D^2=e^{\Log(C)}=C$ as desired.
\end{proof}

In~\cite{FS}, Fong and Sourour prove the following two theorems (in
their paper it is just one theorem) in a more general context. We
state here the version of the theorems we will need.

\begin{theorem}[Fong--Sourour\cite{FS}]\label{th:fs1}
Let $A_j, B_j \in \Mn$ for $j=1, 2, \dots, m$. Assume that the set
$\{B_1, B_2, \dots, B_m \}$ is linearly independent and define the linear
map  $\Phi: \Mn \to \Mn$ by
$$
\Phi(X):=\sum_{j=1}^m A_j X B_j.
$$
Then $\Phi$ is identically zero if and only if $A_j=0$ for all $j$.
\end{theorem}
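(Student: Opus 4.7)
The ``only if'' direction is the one requiring work; the ``if'' direction is immediate, since if every $A_j=0$ then clearly $\Phi(X)=\sum_j A_j X B_j=0$ for every $X\in\Mn$. For the converse, my plan is to test $\Phi$ on the standard rank-one matrices, which will decouple the $A_j$'s entry by entry from the $B_j$'s.

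Specifically, let $e_1,\dots,e_n$ denote the standard basis of $\C^n$ (as column vectors) and fix indices $p,q\in\{1,\dots,n\}$. The matrix $X_{p,q}:=e_p e_q^*$ has a $1$ in position $(p,q)$ and zeros elsewhere, and a direct computation gives
$$
A_j X_{p,q} B_j \;=\; (A_j e_p)(e_q^* B_j),
$$
whose $(r,s)$-entry is $(A_j)_{r,p}(B_j)_{q,s}$. Thus the hypothesis $\Phi(X_{p,q})=0$, read entry by entry in $(r,s)$, says that for every fixed $r,p$ the equation
$$
\sum_{j=1}^m (A_j)_{r,p}\, (B_j)_{q,s} \;=\; 0 \qquad\text{for all } q,s
$$
holds, which is exactly the statement that the matrix $\sum_{j=1}^m (A_j)_{r,p}\, B_j$ is the zero matrix in $\Mn$.

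Now I invoke the hypothesis that $\{B_1,\dots,B_m\}$ is linearly independent: the scalar coefficients $(A_j)_{r,p}$ must therefore all vanish. Since $r$ and $p$ were arbitrary, every entry of every $A_j$ is zero, and hence $A_j=0$ for each $j=1,2,\dots,m$, as required.

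There is really no serious obstacle here — the essential trick is simply choosing the right test matrices. One could also phrase this using the isomorphism $\Mn\otimes\Mn \to \mathrm{End}(\Mn)$ sending $A\otimes B^T$ to the map $X\mapsto AXB$, but since the paper aims at elementary arguments, the rank-one test above is the cleanest route and avoids any tensor-product machinery.
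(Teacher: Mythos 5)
Your argument is correct, and it differs from the paper in an essential organizational way: the paper does not prove Theorem~\ref{th:fs1} at all, but imports it from Fong and Sourour~\cite{FS}, where it is established in the much more general setting of elementary operators; your rank-one test gives a complete, self-contained, purely matrix-theoretic proof of the special case actually needed. The computation is sound: for $X_{p,q}=e_pe_q^*$ the $(r,s)$-entry of $A_jX_{p,q}B_j$ is indeed $(A_j)_{r,p}(B_j)_{q,s}$, and fixing $(r,p)$ while letting $q$ range over the different test matrices and $s$ over the entries of the output yields exactly the matrix identity $\sum_{j=1}^m (A_j)_{r,p}B_j=0$, so linear independence of $\{B_1,\dots,B_m\}$ kills every entry $(A_j)_{r,p}$. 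What the citation buys the paper is uniformity (the same reference also covers Theorem~\ref{th:fs2} and infinite-dimensional generality); what your proof buys is elementarity, which is in the spirit the authors explicitly say they are striving for, and it adapts with no extra effort to give Theorem~\ref{th:fs2} as well, by first rewriting $\Phi$ over the independent subfamily $\{B_1,\dots,B_s\}$ and applying the same entrywise argument to the resulting coefficients.
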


In the theorem above, what happens if the set $\{B_1, B_2, \dots, B_m
\}$ is not linearly independent? By renaming the indices, we may
assume that there exists an index $s<m$ such that the set $\{B_1, B_2,
\dots, B_s \}$ is linearly independent and generates the linear span
of $\{B_1, B_2, \dots, B_m \}$. Of course, we also rename the matrices
in the set  $\{A_1, A_2, \dots, A_m \}$ in order to keep the same
transformation $\Phi$. In the statement of the following theorem, we
assume we have done this reordering

\begin{theorem}[Fong--Sourour\cite{FS}]\label{th:fs2}
Let $A_j, B_j \in \Mn$ for $j=1, 2, \dots, m$. Assume that the set $\{B_1,
B_2, \dots, B_s \}$ is linearly independent for some $s<m$ and it
generates the linear span of the set $\{ B_1, B_2, \dots, B_m \}$. Let
$$
B_j=\sum_{k=1}^s c_{k,j} B_k
$$
for constants $c_{k,j}\in \C$ and $s+1 \leq j \leq m$. Define the linear
map $\Phi : \Mn \to \Mn$ by
$$
\Phi(X):=\sum_{j=1}^m A_j X B_j.
$$
Then, $\Phi$ is identically zero if and only if
$$
A_k = - \sum_{j=s+1}^m c_{k,j} A_j
$$
for $1 \leq k \leq s$.
\end{theorem}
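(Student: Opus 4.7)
The plan is to reduce Theorem \ref{th:fs2} to the linearly-independent case already handled by Theorem \ref{th:fs1}. The hypothesis lets us express the dependent matrices $B_{s+1}, \dots, B_m$ as explicit combinations of the independent ones $B_1, \dots, B_s$, so substituting these combinations into $\Phi(X)$ should collapse the double sum into a single sum indexed by $k \leq s$ with coefficients that are linear combinations of the $A_j$'s. The plan is to do exactly that substitution and then apply Theorem \ref{th:fs1} to the reduced sum.

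First, for the ``if'' direction, I would assume that $A_k = -\sum_{j=s+1}^m c_{k,j} A_j$ for $1 \leq k \leq s$, and just plug everything in. Using $B_j = \sum_{k=1}^s c_{k,j} B_k$ for $j \geq s+1$, the sum $\sum_{j=s+1}^m A_j X B_j$ rewrites as $\sum_{k=1}^s \bigl(\sum_{j=s+1}^m c_{k,j} A_j\bigr) X B_k$, which by hypothesis equals $-\sum_{k=1}^s A_k X B_k$. Adding back $\sum_{k=1}^s A_k X B_k$ gives $\Phi(X)=0$ for every $X$.

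For the ``only if'' direction, I would assume $\Phi \equiv 0$ and perform the same substitution inside $\Phi$. This yields
$$
0 = \Phi(X) = \sum_{k=1}^s A_k X B_k + \sum_{j=s+1}^m A_j X \left(\sum_{k=1}^s c_{k,j} B_k\right) = \sum_{k=1}^s \widetilde{A}_k X B_k,
$$
where $\widetilde{A}_k := A_k + \sum_{j=s+1}^m c_{k,j} A_j$. Since $\{B_1, \dots, B_s\}$ is linearly independent by assumption, Theorem~\ref{th:fs1} applies to the transformation $X \mapsto \sum_{k=1}^s \widetilde{A}_k X B_k$ and forces $\widetilde{A}_k=0$ for each $k \leq s$, which is exactly the asserted identity.

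There is no real obstacle here; the only thing to be careful about is interchanging the two summations and keeping the index sets straight during the substitution. The whole argument is essentially a bookkeeping reduction to Theorem~\ref{th:fs1}, which carries all the analytic content.
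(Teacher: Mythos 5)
Your argument is correct. Note, however, that the paper does not prove this statement at all: both Theorem~\ref{th:fs1} and Theorem~\ref{th:fs2} are quoted from Fong and Sourour~\cite{FS} (in their paper they form a single theorem in a more general setting), so there is no in-paper proof to compare against. What you have done is give the standard reduction of the linearly dependent case to the linearly independent case: substituting $B_j=\sum_{k=1}^s c_{k,j}B_k$ for $j\geq s+1$, regrouping so that $\Phi(X)=\sum_{k=1}^s\bigl(A_k+\sum_{j=s+1}^m c_{k,j}A_j\bigr)XB_k$, and invoking Theorem~\ref{th:fs1} on the independent set $\{B_1,\dots,B_s\}$ to conclude that each bracketed coefficient vanishes; the converse direction is the same substitution run forwards. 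Both steps are sound, and the interchange of the two finite sums is harmless. The only caveat is that your proof is conditional on Theorem~\ref{th:fs1}, which you (like the paper) take as given; all of the genuine analytic content of the Fong--Sourour result lives there, so your derivation should be read as showing that Theorem~\ref{th:fs2} is a formal consequence of Theorem~\ref{th:fs1} rather than as an independent proof of the Fong--Sourour identity.
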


The above two theorems will be used throughout this paper. The
following lemma is a consequence of them.

\begin{lemma}\label{le:eq}
Let $A_j, B_j \in \Mn$ for $j=1, 2, \dots, m$ such that $\{ A_1, A_2,
\dots, A_m\}$ is a linearly independent set and $\{ B_1, B_2, \dots,
B_m\}$ is a linearly independent set. Assume that 
$$
\sum_{j=1}^m A_j X B_j =\sum_{j=1}^m A_j^* X B_j^* 
$$
for all $X \in \Mn$. Then, there exist constants $c_{k,j}$ such that
$$
A_k^*=\sum_{j=1}^m \conj{c_{k,j}} A_j
$$
and 
$$
B_k^*=\sum_{j=1}^m c_{j,k} B_j,
$$
for $k=1, 2, \dots, m$.
\end{lemma}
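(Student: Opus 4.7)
The plan is to combine the two sums in the hypothesis into a single expression of the form $\sum (\cdot) X (\cdot) = 0$ and then apply the Fong--Sourour theorems stated above. Rewrite the hypothesis as
$$
\sum_{j=1}^m A_j X B_j + \sum_{j=1}^m (-A_j^*) X B_j^* = 0 \quad \text{for all } X \in \Mn.
$$
If $\{B_1,\dots,B_m,B_1^*,\dots,B_m^*\}$ were linearly independent, Theorem~\ref{th:fs1} would force $A_j=0$ and $A_j^*=0$ for all $j$, contradicting the linear independence of $\{A_1,\dots,A_m\}$. So the combined collection on the right must be linearly dependent, and the remaining work is to pin down the exact dependence relations.

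Since $\{B_1,\dots,B_m\}$ is linearly independent, extend it to a linearly independent set $\{B_1,\dots,B_m,D_1,\dots,D_q\}$ whose span equals $\operatorname{span}\{B_1,\dots,B_m,B_1^*,\dots,B_m^*\}$, with each $D_\ell$ chosen from among the $B_i^*$'s. Then each $B_j^*$ has a unique expansion
$$
B_j^* = \sum_{k=1}^m c_{k,j}\, B_k + \sum_{\ell=1}^q d_{\ell,j}\, D_\ell.
$$
Substituting into the combined equation and regrouping according to the basis $\{B_1,\dots,B_m,D_1,\dots,D_q\}$ yields
$$
\sum_{k=1}^m \Bigl(A_k - \sum_{j=1}^m c_{k,j} A_j^*\Bigr) X B_k \;-\; \sum_{\ell=1}^q \Bigl(\sum_{j=1}^m d_{\ell,j} A_j^*\Bigr) X D_\ell = 0.
$$
Applying Theorem~\ref{th:fs1} to this expression (whose right-hand matrices form a linearly independent set) forces $A_k = \sum_j c_{k,j} A_j^*$ for every $k$ and $\sum_j d_{\ell,j} A_j^* = 0$ for every $\ell$. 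Since $\{A_1,\dots,A_m\}$ is linearly independent, so is $\{A_1^*,\dots,A_m^*\}$, and hence $d_{\ell,j}=0$ for all $\ell,j$.

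Plugging $d_{\ell,j}=0$ back into the expansion of $B_j^*$ gives $B_j^* = \sum_{k=1}^m c_{k,j} B_k$; relabeling indices produces $B_k^* = \sum_{j=1}^m c_{j,k} B_j$, which is the second conclusion. Simultaneously, taking conjugate transposes in $A_k = \sum_j c_{k,j} A_j^*$ gives $A_k^* = \sum_j \overline{c_{k,j}} A_j$, which is the first conclusion, with the \emph{same} constants $c_{k,j}$ in both places by construction. There is no real obstacle in this plan; the one point that needs a small verification is that the same scalars serve both conclusions, and this is automatic because a single basis expansion of $B_j^*$ produces both sets of coefficient identities at once.
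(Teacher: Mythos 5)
Your proof is correct and follows essentially the same route as the paper's: both use the Fong--Sourour machinery together with the linear independence of $\{A_1,\dots,A_m\}$ to force every $B_j^*$ into the span of $\{B_1,\dots,B_m\}$, and then read off the coefficient relations $A_k=\sum_j c_{k,j}A_j^*$ and $B_j^*=\sum_k c_{k,j}B_k$ with the same constants. The only difference is bookkeeping: you substitute the basis expansion and apply Theorem~\ref{th:fs1} once to the extended independent family, whereas the paper invokes Theorem~\ref{th:fs2} and runs an explicit contradiction in the case where some $B_j^*$'s are independent of the $B_k$'s.
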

\begin{proof}
Define $\Phi:\Mn \to \Mn$ as
$$
\Phi(X):=\sum_{j=1}^m A_j X B_j - \sum_{j=1}^m A_j^* X B_j^*.
$$
By hypothesis, $\Phi$ is identically zero. If 
$$
\{ B_1, B_2, \dots, B_m, B_1^*, B_2^*, \dots, B_m^*\}
$$ 
is linearly independent, then, by Theorem~\ref{th:fs1}, it follows
that $A_j=0$ for all $j$, which contradicts the hypothesis. By
reordering (simultaneously) the indices of the sets $\{ B_1, B_2, \dots, B_m\}$
and $\{ A_1, A_2, \dots, A_m \}$ we may assume that the set
$$
\{ B_1, B_2, \dots, B_m, B_1^*, \dots, B_s^*\}
$$ 
is linearly independent for some $1 \leq s < m$, and it generates the
span of 
$$
\{ B_1, B_2, \dots, B_m, B_1^*, B_2^*, \dots, B_m^*\}.
$$
It then follows that
$$
B_j^*=\sum_{k=1}^m t_{k,j} B_k + \sum_{k=1}^s t_{k,j}^*B_k^*
$$
for some constants $t_{k,j}$ and $t_{k,j}^*$ (not all zero) for $s+1 \leq j \leq
m$. By Theorem~\ref{th:fs2}, we have that
$$
A_k=\sum_{j=s+1}^m t_{k,j} A_j^*
$$
for $1 \leq k \leq m$ and
$$
A_k^*= - \sum_{j=s+1}^m t_{k,j}^* A_j^*
$$
for $1 \leq k \leq s$. This last equation can be written as
$$
A_k= - \sum_{j=s+1}^m  \conj{t_{k,j}^*} A_j
$$
which contradicts the linear independence of the set $\{ A_1, A_2,
\dots, A_m \}$.

From this, we conclude that the set 
$$
\{ B_1, B_2, \dots, B_m\}
$$
is linearly independent and each element in $\{ B_1^*, B_2^*, \dots,
B_m^*\}$ is a linear combination of the elements in $\{ B_1, B_2,
\dots, B_m\}$. Thus
$$
B_j^*=\sum_{k=1}^m c_{k,j} B_k 
$$
for some constants $c_{k,j}$ for $1 \leq j \leq m$ and hence, by
Theorem~\ref{th:fs2},
$$
A_k=\sum_{j=1}^m c_{k,j} A_j^* 
$$
and hence
$$
A_k^*=\sum_{j=1}^m \conj{c_{k,j}} A_j,
$$
as desired.
\end{proof}


\section{Hermitian Matrices}\label{sec:her}

It is clear that if we have a collection of Hermitian matrices $A_i$,
$B_i$, for $i=1, 2, \dots, m$, then the operator $\calA$ defined by
$$
\calA (X) = \sum_{i=1}^m A_i X B_i 
$$
has the property that
$$
\lip \calA (X), Y \rip = \lip X, \calA (Y) \rip 
$$
for all nozero $X \in \Mn$. Indeed,
\begin{align*}
\lip \calA (X), Y \rip 
&= \tr \left( Y^* \left(\sum_{i=1}^m A_i X B_i \right) \right) \\
&= \tr \left( \sum_{i=1}^m Y^* A_i X B_i \right) \\
&= \tr \left( \sum_{i=1}^m B_i Y^* A_i X \right) \\
&= \tr \left(\left(\sum_{i=1}^m A_i Y B_i \right)^* X \right) \\
&=\lip X, \calA(Y) \rip.
\end{align*}

In this section we will show that, given the linear transformation
$\calA: \Mn \to \Mn$ defined by
$$
\calA(X)=\sum_{i=1}^m A_i X B_i
$$
with $\lip X, Y \rip_*=\lip \calA(X), Y \rip$ an inner product, we can
choose the matrices $A_i$ and $B_i$ to be Hermitian. We start with a
calculation similar to the one we did above, which we record here for future use.

\begin{lemma}\label{le:adjointA}
Let $\calA : \Mn \to \Mn$ be given by 
$$
\calA (X) = \sum_{j=1}^m E_j X F_j,
$$ 
for matrices $E_j, F_j \in \Mn$. Then
$$
\lip X, \calA(Y) \rip = \tr\left( Y^* \sum_{j=1}^m  E_j^* X F_j^* \right).
$$
\end{lemma}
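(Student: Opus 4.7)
The plan is to unfold the definition of $\lip \cdot, \cdot \rip$ in terms of the trace, then manipulate using the standard identity $(ABC)^* = C^* B^* A^*$ together with the cyclic property of the trace.

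First I would write
\[
\lip X, \calA(Y) \rip = \tr\bigl( \calA(Y)^* X \bigr) = \tr\left( \Bigl(\sum_{j=1}^m E_j Y F_j\Bigr)^* X \right),
\]
and then distribute the adjoint across the sum to obtain $\sum_{j=1}^m \tr(F_j^* Y^* E_j^* X)$ by linearity of the trace.

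Next I would invoke the cyclic property $\tr(ABC)=\tr(BCA)$, applied with $A=F_j^*$, $B=Y^*$, $C=E_j^* X$, to rewrite each summand as $\tr(Y^* E_j^* X F_j^*)$. Pulling the sum back inside the trace then yields the desired expression
\[
\lip X, \calA(Y) \rip = \tr\left( Y^* \sum_{j=1}^m E_j^* X F_j^* \right).
\]

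There is no substantive obstacle here; the whole argument is a one-line chain of identities, and the only point that deserves a moment of care is making sure the conjugate-transpose distributes correctly across the product $E_j Y F_j$ and that the cyclic rotation is applied to the full triple product (not the forbidden swap $\tr(ABC) = \tr(BAC)$, which was explicitly warned against in the Preliminaries).
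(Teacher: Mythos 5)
Your proposal is correct and follows exactly the paper's own proof: expand $\lip X, \calA(Y)\rip = \tr(\calA(Y)^*X)$, distribute the adjoint over each summand to get $\tr(F_j^* Y^* E_j^* X)$, and apply the cyclic property of the trace to rewrite this as $\tr(Y^* E_j^* X F_j^*)$ before recombining by linearity. Nothing to add.
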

\begin{proof}
This is just a calculation:
\begin{align*}
\lip X, \calA(Y) \rip 
&= \tr\left( ( \calA(Y))^* X \right)\\
&= \tr\left( \sum_{j=1}^m  F_j^* Y^* E_j^* X \right)\\
&= \tr\left( \sum_{j=1}^m  Y^* E_j^* X F_j^* \right)\\
&= \tr\left( Y^* \sum_{j=1}^m  E_j^* X F_j^* \right).
\end{align*}
\end{proof}

The following is the main theorem of this section. It characterizes
the selfadjoint linear transformations from $\Mn$ to $\Mn$.

\begin{theorem}\label{th:hermitian}
Let $\calA : \Mn \to \Mn$ be the linear transformation defined, for
all $X \in \Mn$, as
$$
\calA(X)=\sum_{j=1}^m E_j X F_j,
$$
where the sets $\{ E_1, E_2, \dots, E_m \}$ and $\{ F_1, F_2, \dots,
F_m \}$ are linearly independent.

If for all $X, Y \in \Mn$ we have
$$
\lip \calA (X), Y \rip = \lip X, \calA(Y) \rip,
$$
then there exist $m \in \N$ and Hermitian matrices $A_j$ and $B_j$
in $\Mn$, for $j=1, 2, \dots, m$, such that
$$
\calA(X)=\sum_{j=1}^m A_j X B_j
$$
for all $X \in \Mn$. Furthermore, the sets $\{A_1, A_2, \dots, A_m\}$
and $\{ B_1, B_2, \dots, B_m\}$ are linearly independent.
\end{theorem}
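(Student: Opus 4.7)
The plan is to reduce the theorem to a change-of-basis problem in $M_m(\C)$ for which Theorem~\ref{th:coninv} was tailored.

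First I would convert the selfadjointness hypothesis into an equality on the defining matrices. From Lemma~\ref{le:adjointA}, the condition $\lip \calA(X), Y\rip=\lip X, \calA(Y)\rip$ becomes $\tr(Y^*\sum_j E_j X F_j)=\tr(Y^*\sum_j E_j^* X F_j^*)$ for all $Y$, so Proposition~\ref{prop:ortho} yields $\sum_j E_jXF_j=\sum_j E_j^*XF_j^*$ for all $X\in\Mn$. Applying Lemma~\ref{le:eq} then produces constants $c_{k,j}$ with
$$E_k^*=\sum_{j=1}^m \conj{c_{k,j}}\,E_j,\qquad F_k^*=\sum_{j=1}^m c_{j,k}\,F_j.$$

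Next, I would extract a structural condition on the matrix $C:=(c_{k,j})\in M_m(\C)$. Taking the adjoint once more in either identity and using linear independence of $\{E_j\}$ (respectively $\{F_j\}$), I expect to read off $C\conj{C}=I$. At this point Theorem~\ref{th:coninv} applies and furnishes $D\in M_m(\C)$ with $D^2=C$ and $D\conj{D}=I$; in particular $D$ is invertible with $D^{-1}=\conj{D}$.

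The heart of the proof is the change of basis. Writing $D=(d_{k,j})$, I would define
$$A_k:=\sum_{j=1}^m \conj{d_{k,j}}\,E_j,\qquad B_k:=\sum_{j=1}^m d_{j,k}\,F_j.$$
Hermiticity of $A_k$ is a direct check: $A_k^*=\sum_j d_{k,j}E_j^*=\sum_l (D\conj{C})_{k,l}E_l$, and $D\conj{C}=D\cdot D^{-2}=\conj{D}$, so $A_k^*=A_k$. The analogous computation using $C\conj{D}=D$ gives $B_k^*=B_k$. Preservation of $\calA$ follows from
$$\sum_k A_kXB_k=\sum_{i,j}\left(\sum_k \conj{d_{k,i}}\,d_{j,k}\right)E_iXF_j=\sum_{i,j}(D\conj{D})_{j,i}\,E_iXF_j=\sum_i E_iXF_i,$$
and linear independence of $\{A_k\}$ and $\{B_k\}$ is immediate since they arise from $\{E_j\}$ and $\{F_j\}$ via invertible transformations ($D$, $\conj{D}$).

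The main obstacle I anticipate is guessing the correct change-of-basis matrix. The two Hermiticity conditions each translate into an equation of the form $\alpha=C\conj{\alpha}$ for the coefficient matrix $\alpha$, and it is not obvious a priori that such $\alpha$ exists; it is precisely the identity $D=D^2\conj{D}=C\conj{D}$ (available because $D\conj{D}=I$) that makes $\alpha=D$ work, and this is why Theorem~\ref{th:coninv} is needed rather than an arbitrary square root of $C$.
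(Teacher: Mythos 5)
Your proposal is correct and follows essentially the same route as the paper's own proof: Lemma~\ref{le:adjointA} plus Proposition~\ref{prop:ortho} to get $\sum_j E_jXF_j=\sum_j E_j^*XF_j^*$, Lemma~\ref{le:eq} to produce $C$ with $C\conj{C}=I$, Theorem~\ref{th:coninv} to obtain the square root $D$ with $D\conj{D}=I$, and the same definitions $A_k=\sum_j\conj{d_{k,j}}E_j$, $B_k=\sum_j d_{j,k}F_j$ with the identities $D\conj{C}=\conj{D}$ and $C\conj{D}=D$ giving Hermiticity. Your verification that $\calA$ is preserved and that linear independence follows from the invertibility of $D$ matches the paper's argument.
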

\begin{proof}
First of all observe that by Lemma~\ref{le:adjointA}, for each $X, Y
\in \Mn$, we have
$$
\lip X, \calA(Y) \rip = \tr\left(
  Y^* \sum_{j=1}^m  E_j^* X F_j^* \right).
$$
Since, by hypothesis, 
$$
\lip \calA (X), Y \rip = \lip X, \calA(Y) \rip
$$
for all $X$ and $Y$, this implies that
$$
\tr\left(Y^* \sum_{j=1}^m  E_j X F_j \right)
= \tr\left(Y^* \sum_{j=1}^m  E_j^* X F_j^* \right)
$$
for all $Y \in \Mn$ and hence, by Proposition~\ref{prop:ortho}, we have
$$
\sum_{j=1}^m  E_j X F_j = \sum_{j=1}^m  E_j^* X F_j^*
$$
for all $X \in \Mn$.

By Lemma~\ref{le:eq}, there exist constants $c_{j,k} \in \C$ such that
$$
E_k^*=\sum_{j=1}^m \conj{c_{k,j}} E_j
$$
and 
$$
F_k^*=\sum_{i=1}^m c_{i,k} F_i,
$$
for $k=1, 2, \dots, m$.

Taking adjoints in the first expression above, and renaming indices, we obtain
$$
E_i=\sum_{k=1}^m c_{i,k} E_k^*.
$$
Combining both expressions we get
$$
E_i=\sum_{k=1}^m c_{i,k} E_k^*= \sum_{k=1}^m c_{i,k} \sum_{j=1}^m
\conj{c_{k,j}} E_j = \sum_{j=1}^m \sum_{k=1}^m c_{i,k} \conj{c_{k,j}} E_j.
$$
Since the set $\{E_1, E_2, \dots, E_m \}$ is linearly independent, it
follows that
$$
\sum_{k=1}^m c_{i,k} \conj{c_{k,j}} = \delta_{i,j}
$$
for each $i, j \in \{ 1, 2, \dots, m \}$. That is, if we define the
matrix $C \in M_m(\C)$ as $C=\begin{pmatrix} c_{i,j} \end{pmatrix}$,
we have that $C \conj{C}=I$.

By Theorem~\ref{th:coninv}, there exists a matrix $D$ such that $D
\conj{D}=I$ and $D^2=C$. From this, it follows that $D \conj{C} =
\conj{D}$. If $D=\begin{pmatrix} d_{i,j} \end{pmatrix}$, this means
that
$$
\sum_{k=1}^m d_{i,k} \conj{c_{k,j}} = \conj{d_{i,j}}
$$
and then
$$
\sum_{k=1}^m d_{i,k} E_k^* 
= \sum_{k=1}^m d_{i,k} \sum_{j=1}^m \conj{c_{k,j}} E_j 
= \sum_{j=1}^m \sum_{k=1}^m d_{i,k} \conj{c_{k,j}} E_j
=  \sum_{j=1}^m \conj{d_{i,j}} E_j.
$$
Thus, for each $k=1, 2, \dots, m$, the matrix
$$
A_k:=\sum_{j=1}^m \conj{d_{k,j}} E_j 
$$
is Hermitian.

Analogously, since $C \conj{D} = D$, it follows that
$$
\sum_{k=1}^m c_{i,k} \conj{d_{k,j}}= d_{i,j}
$$
and hence
$$
\sum_{k=1}^m \conj{d_{k,j}} F_k^* 
= \sum_{k=1}^m \conj{d_{k,j}} \sum_{i=1}^m c_{i,k} F_i
= \sum_{i=1}^m \sum_{k=1}^m c_{i,k} \conj{d_{k,j}}  F_i
= \sum_{i=1}^m d_{i,j} F_i.
$$
Thus, for each $k=1, 2, \dots, m$, the matrix
$$
B_k:=\sum_{i=1}^m d_{i,k} F_i 
$$
is Hermitian.

Since $D \conj{D}= I$, we have
$$
\sum_{k=1}^m d_{i,k} \conj{d_{k,j}} = \delta_{i,j}.
$$
Now, observe that, for each $X \in \Mn$ we have
\begin{align*}
\sum_{k=1}^m A_k X B_k 
&= \sum_{k=1}^m \left( \sum_{j=1}^m \conj{d_{k,j}} E_j \right) X \left(
  \sum_{i=1}^m d_{i,k} F_i \right) \\
&=  \sum_{i=1}^m \sum_{j=1}^m \sum_{k=1}^m d_{i,k} \conj{d_{k,j}}  E_j
X F_i \\
&=  \sum_{j=1}^m  E_j X F_j = \calA(X).
\end{align*}
The linear independence of the sets $\{ A_1, A_2, \dots, A_m \}$ and
$\{B_1, B_2, \dots, B_m \}$ follows easily from their definition and
the invertibility of $D$.
\end{proof}

As a consequence of the previous theorem, we have that for any given
inner product $\lip \cdot, \cdot \rip_*$ on $\Mn$ there exists $m \in
\N$ and Hermitian matrices $A_i$ and $B_i$, with $i=1, 2, \dots, m$
such that
$$
\lip X, Y \rip_* = \tr\left(\sum_{i=1}^m Y^* A_i X B_i \right)
$$
for all $X$ and $Y \in \Mn$. Can the matrices $A_i$ and $B_i$ be
chosen to be positive? We deal with that question in the next section.

\subsection*{Note:} The referee has observed that one obtains a weaker
conclusion  of the above theorem by observing that, in the above
proof, once we know the equality
$$
\sum_{j=1}^m  E_j X F_j = \sum_{j=1}^m  E_j^* X F_j^*
$$
for all $X \in \Mn$, it then follows that
$$
\calA(X)= \frac{1}{2} \sum_{j=1}^m  E_j X F_j + E_j^* X F_j^*.
$$
A computation then shows that 
$$
\calA(X)= \sum_{j=1}^m  \left(\frac{E_j+E_j^*}{2}\right) X \left(\frac{F_j +F_j^*}{2}\right) +
\left(\frac{E_j-E_j^*}{-2i}\right) X \left(\frac{F_j-F_j^*}{2i}\right).
$$
In the expression above, each of the matrices that multiply $X$ is
Hermitian and the conclusion of the theorem follows with $2m$ summands instead of $m$,
without using Theorems \ref{th:coninv}, \ref{th:fs1}, \ref{th:fs2} or
Lemma \ref{le:eq}. As the referee noted, our claim is stronger. We
appreciate the comment and thank the referee for the observation.


\section{Positive Matrices}\label{sec:pos}

Let $m>0$. It is clear that if we have a collection of positive matrices $A_i$,
$B_i$, for $i=1, 2, \dots, m$, then the operator $\calA$ defined by
$$
\calA (X) = \sum_{i=1}^m A_i X B_i 
$$
has the property that
$$
\lip \calA (X), X \rip >0
$$
for all nonzero $X \in \Mn$. Indeed,
\begin{align*}
\lip \calA (X), X \rip = \sum_{i=1}^m \tr(X^* A_i X B_i) 
\end{align*}
and each term is positive by Lemma~\ref{le:tra_pos}.

The natural question arises: is the converse true? That is, if $\calA
: \Mn \to \Mn$ is defined as
$$
\calA (X) = \sum_{i=1}^m A_i X B_i 
$$
and $\lip \calA(X), X \rip >0 $ for all nonzero $X\in \Mn$, can we
choose the matrices $A_i$ and $B_i$ to be positive? If $\calA$
consists of one summand, the answer is affirmative.

\begin{theorem}\label{th:main1}
Let $\calA: \Mn \to \Mn$ be given by $\calA (X) = A X B$ with $A, B
\in \Mn$ Hermitian. If $\lip \calA(X), X \rip >0$ for all $X \in
\Mn$, $X \neq 0$, then there exist positive matrices $A'$ and $B'$ in
$\Mn$ such that
$$
\calA (X) = A' X B'.
$$
\end{theorem}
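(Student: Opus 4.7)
The plan is to reduce the claim to a sign analysis of the two Hermitian quadratic forms $x \mapsto x^* A x$ and $y \mapsto y^* B y$, by testing $\calA$ only on rank-one matrices. Specifically, I would apply the lemma from Section~\ref{sec:pre} computing $\lip \calA(xy^*), xy^*\rip$, which in the one-summand case $\calA(X) = AXB$ yields
$$
\lip \calA(xy^*), xy^* \rip = (x^* A x)(y^* B y).
$$
Since $xy^* \neq 0$ whenever $x, y \neq 0$, the hypothesis immediately gives $(x^* A x)(y^* B y) > 0$ for all nonzero $x, y \in \C^n$.

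Next I would argue that each of $A$ and $B$ is definite. Neither is zero, for otherwise $\calA$ would be identically zero and the hypothesis would fail. Since $A$ is a nonzero Hermitian matrix, it has a nonzero eigenvalue, so there exists $x_0 \neq 0$ with $x_0^* A x_0 \neq 0$. Substituting this $x_0$ into the product inequality forces $y^* B y$ to have constant nonzero sign as $y$ ranges over nonzero vectors, so $B$ is either positive definite or negative definite. Reversing the roles of $A$ and $B$ shows the same for $A$.

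Finally I would match the signs and conclude. The two definite forms must share their common sign, otherwise their product would be negative for some choice of $x, y$. If both $A$ and $B$ are positive definite, I set $A' := A$ and $B' := B$. If both are negative definite, I set $A' := -A$ and $B' := -B$; then $A'$ and $B'$ are positive and
$$
A' X B' = (-A) X (-B) = A X B = \calA(X).
$$
I do not expect a real obstacle: once $\calA$ has been tested against rank-one matrices, everything follows from the elementary observation that a Hermitian quadratic form taking values of constant nonzero sign is definite. The slight subtlety worth stating carefully is why the signs of $A$ and $B$ must agree, but this is forced directly by the product being strictly positive.
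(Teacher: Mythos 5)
Your proposal is correct and follows essentially the same route as the paper: evaluate $\lip \calA(xy^*),xy^*\rip=(x^*Ax)(y^*By)>0$ on rank-one matrices, conclude both Hermitian forms are definite of the same sign, and replace $(A,B)$ by $(-A,-B)$ in the negative case. The only cosmetic difference is your eigenvalue step to find $x_0$ with $x_0^*Ax_0\neq 0$, which is not needed since the strict positivity of the product already forces both factors to be nonzero for every choice of nonzero vectors.
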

\begin{proof}
Let $x$ and $y$ be nonzero vectors in $\C^n$. Then
$$
\lip \calA(x y^*), x y^* \rip > 0
$$
and hence
$$
\lip \calA(x y^*), x y^* \rip = (x^* A x) (y^* B y) > 0.
$$

If $y^* B y > 0$ for some nonzero $y \in \C^n$, this means that $x^* A x >0$
for all nonzero $x \in \C^n$ and hence that $A$ is positive. In turn, this
implies that $y^* B y > 0$ for all nonzero $y \in \C^n$ and hence $B$ is
positive. Thus the theorem is proved with $A'=A$ and $B'=B$.

On the other hand, if $y^* B y < 0$ for some nonzero $y \in \C^n$,
this means that $x^* A x < 0$ for all nonzero $x \in \C^n$ and hence that $-A$
is positive. In turn, this implies that $y^* B y < 0$ for all nonzero $y \in
\C^n$ and hence $-B$ is positive. Thus the theorem is proved with
$A'=-A$ and $B'=-B$.
\end{proof}

\subsection{Two or more summands}

The rest of this paper will be devoted to try to answer the question
above in the case of two or more summands. The
Proposition~\ref{prop:Bpos} below gives a partial answer.

In the next proposition, if $m=2$ whenever we write $\ds \sum_{i=3}^m
\square $ we assume the sum is zero.

\begin{proposition}\label{prop:Bpos}
Let $m \in \N$ with $m\geq 2$. Let $\calA : \Mn \to \Mn$ be given by
$$
\calA(X) = \sum_{i=1}^m A_i X B_i,
$$
with $A_i, B_i$ Hermitian.  If $\lip \calA(X), X \rip >0$ for all
nonzero $X \in \Mn$ then there exist positive matrices $B_i'$ for
$i=1, 2, \dots m$, a positive matrix $A_1'$ and Hermitian matrices
$A_i'$ for $i=2, 3, \dots, m$, such that
$$
\calA(X) = \sum_{i=1}^m A_i' X B_i', \quad \hbox{ for all } X \in M_n.
$$
\end{proposition}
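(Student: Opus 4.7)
My strategy is to refine the given decomposition of $\calA$ in three successive stages, using throughout: (a) the rank-one consequence $\sum_i (x^* A_i x)(y^* B_i y) = \lip \calA(xy^*), xy^* \rip > 0$ of the hypothesis (for all nonzero $x, y \in \C^n$); (b) the openness of the positive cone inside the Hermitian matrices; and (c) the intermediate value arguments recorded in the preliminaries.

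First, I would show that $\mathrm{span}(B_1, \ldots, B_m)$ contains a positive matrix. For any fixed $x_0 \in \C^n \setminus \{0\}$, the rank-one identity (running $y$ over all nonzero vectors) forces $\sum_i (x_0^* A_i x_0) B_i$ to be positive definite. A small-perturbation argument based on the openness of the positive cone then upgrades this single positive element to a family $B_1', \ldots, B_m'$ of linearly independent positive matrices spanning the same subspace. Rewriting $\calA(X) = \sum_i A_i' X B_i'$ through this invertible real change of basis keeps every $A_i'$ Hermitian.

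Second, I would install a positive $A_1$. Because every $B_i'$ is now positive, the scalars $\gamma_i := y_0^* B_i' y_0$ are strictly positive for any nonzero $y_0$, and the symmetric use of the rank-one identity forces $\sum_i \gamma_i A_i'$ to be positive. Absorbing this as the new first $A$-matrix via an invertible real change of basis on the $A$-side induces a corresponding change on the $B$-side that replaces each $B_j'$ (for $j \geq 2$) by $B_j' - (\gamma_j/\gamma_1) B_1'$; these new matrices typically fail to be positive (in fact $y_0$ lies in their kernel). The third and most delicate step is to perform one more change of basis, adding multiples $t_j B_1''$ back to each $B_j''$ --- which restores positivity for $t_j$ sufficiently large, since $B_1''$ is positive --- while correspondingly subtracting $\sum_{j \geq 2} t_j A_j''$ from $A_1''$, which preserves positivity for $t_j$ sufficiently small by openness.

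The main obstacle is precisely this balancing in the final stage: large $t_j$'s are needed to make the $B_j$'s positive, but small $t_j$'s are needed to keep $A_1$ positive. The resolution should combine the intermediate value theorem applied to the Hermitian pencils $B_j'' + t B_1''$ and $A_1'' - t A_j''$ with a careful choice of $y_0$ in the second stage, so that the ratios $\gamma_j/\gamma_1$ are close to (but do not exceed) the smallest generalized eigenvalues of $(B_j', B_1')$; this would keep the required shifts moderate enough that the open positivity regions on the two sides overlap, yielding the desired $A_1'$ and $B_1', \ldots, B_m'$ simultaneously.
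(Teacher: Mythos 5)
Your first two stages are sound. Fixing $x_0$ and letting $y$ run over all nonzero vectors does show that $\sum_i (x_0^*A_ix_0)B_i$ is positive, and (modulo the harmless technicality of passing to a real basis of the span of the $B_i$'s) one can indeed rewrite $\calA$ with all $B$-matrices positive and all $A$-matrices Hermitian; likewise, for any nonzero $y_0$ the combination $\sum_i (y_0^*B_i'y_0)A_i'$ is positive and can be installed as the first $A$-matrix at the cost of replacing $B_j'$ by $B_j'-(\gamma_j/\gamma_1)B_1'$ for $j\geq 2$. (A small slip: $y_0^*B_j''y_0=0$ does not put $y_0$ in the kernel of the indefinite matrix $B_j''$, but nothing depends on that.) The genuine gap is the third stage, which is exactly where the work lies and which you have not carried out. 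After your second stage the positive $A$ and the only positive $B$ sit in the \emph{same} summand, so every repair $B_j''\mapsto B_j''+t_jB_1''$ must be compensated by subtracting $t_jA_j''$ from that same positive $A_1''$, and positivity of $B_j''+t_jB_1''$ forces $t_j$ to exceed the gap $\gamma_j/\gamma_1-\tau_j$, where $\tau_j$ is the critical value of the pencil $B_j'-tB_1'$. Your proposed resolution is to pick $y_0$ so that all these gaps are small; but a single $y_0$ cannot in general be (near-)minimizing for all the pencils $(B_j',B_1')$, $j=2,\dots,m$, simultaneously once $m\geq 3$, so the required $t_j$'s stay bounded away from zero and nothing guarantees that $A_1''-\sum_{j\geq 2}t_jA_j''$ stays positive. ``Should combine'' and ``would keep the shifts moderate'' is a hope, not an argument; your outline does close for $m=2$ (take $y_0$ critical for the single pencil, so only an arbitrarily small $t_2$ is needed), which is essentially Theorem~\ref{th:main2}, but not beyond.

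The paper sidesteps the balancing problem rather than solving it. Before extracting a positive $A$, it secures \emph{two} positive $B$'s, say $B_1$ and $B_2$; it then takes $t_0$ critical for the pencil $B_1-tB_2$ and $y_0$ a corresponding null vector, so that in the inequality $0<\sum_i (x^*A_ix)(y_0^*B_iy_0)$ the term attached to $B_1-t_0B_2$ drops out and the resulting positive combination of $A$'s lands in the summand paired with $B_2$, while the first summand keeps a positive $B$ (namely $B_1-(t_0-\varepsilon)B_2$) together with a merely Hermitian $A_1$. The final shifts $B_i\mapsto B_i+\beta_iB_1$, $i\geq 3$, are then compensated entirely through that unconstrained $A_1$, which the conclusion only asks to be Hermitian, so no tension between ``large $t$'' and ``small $t$'' ever arises. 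To rescue your outline you need the same device: reserve a summand whose $B$ is positive but whose $A$ carries no positivity requirement, and route all compensations through it, rather than through the summand holding your positive $A$.
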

\begin{proof}
We first show that one of the matrices $B_i$ can be chosen to be
positive. Fix a nonzero vector $x_0 \in \C^n$. For all nonzero $y \in
\C^n$ we have
\begin{align*}
0 < \lip \calA(x_0 y^*), x_0 y^* \rip 
&= \sum_{i=1}^m (x_0^* A_i x_0) (y^* B_i y)\\
&=  \sum_{i=1}^m \alpha_i (y^* B_i y)\\
&=  y^* \left( \sum_{i=1}^m \alpha_i B_i \right) y,
\end{align*}
with $\alpha_i:=x_0^* A_i x_0 \in \R$ for $i=1,2, \dots, m$. Hence,
$\ds \sum_{i=1}^m \alpha_i B_i$ is positive. This in turn implies that
not all $\alpha_i$ equal zero. Assume, without loss of generality,
that $\alpha_1\neq 0$. Then, we can write $\calA$ as
$$
\calA(X) = \left(\frac{A_1}{\alpha_1}\right) X \left(\sum_{i=1}^m \alpha_i B_i \right) 
+ \sum_{i=2}^m \left(A_i -\frac{\alpha_i}{\alpha_1} A_1\right) X B_i.
$$

By renaming, we can then assume that
$$
\calA (X) = \sum_{i=1}^m A_i X B_i 
$$
with $B_1$ positive and the rest of the matrices Hermitian.

Since $B_1$ is positive, we can choose a constant $\beta>0$
sufficiently large such that $B_2 + \beta B_1$ is a positive
matrix. We can then write $\calA$ as
$$
\calA(X)= (A_1 -\beta A_2)X B_1 + A_2 X (B_2 + \beta B_1) +
\sum_{i=3}^m A_i X B_i,
$$
with $B_1$ and $B_2+\beta B_1$ positive. By renaming, we may now
assume that 
$$
\calA (X) = \sum_{i=1}^m A_i X B_i 
$$
with $B_1$ and $B_2$ both positive and the rest of the matrices Hermitian.

Consider the family of Hermitian matrices $\{ B_1 - tB_2 \}$ for $t\geq
0$. By continuity, there exists $t_0>0$ such that $B_1 - t B_2$ is
positive for all $t\in[0,t_0)$ and $B_1 -t_0B_2$ has a zero eigenvalue,
with eigenvector $y_0\in \C^n$.

It then follows that, for nonzero $x \in \C^n$
\begin{align*}
0 < \lip \calA(x y_0^*), x y_0^* \rip 
&=\sum_{i=1}^m (x^* A_i x) (y_0^* B_i y_0) \\
&= (x^* A_1 x) (y_0^* (B_1 - t_0 B_2) y_0) + (x^* (A_2 + t_0 A_1) x)
(y_0^* B_2 y_0) \\
& \quad \qquad \qquad + \sum_{i=3}^m (x^* A_i x) (y_0^* B_i y_0)\\
&=  (x^* (A_2 + t_0 A_1) x) (y_0^* B_2 y_0) + \sum_{i=3}^m (x^* A_i x) (y_0^* B_i y_0)\\
&= x^* \left( \gamma_2 (A_2 + t_0 A_1) +  \sum_{i=3}^m \gamma_i A_i \right) x
\end{align*}
with $\gamma_i:= y_0^* B_i y_0 \in\R$ for $i=2, 3, \dots, m$. Observe that
$\gamma_2 >0$. 

We have shown that
$$
\gamma_2 (A_2 + t_0 A_1) +  \sum_{i=3}^m \gamma_i A_i
$$
is positive. By continuity, for $\varepsilon>0$ small enough we also have
$$
\gamma_2 (A_2 + (t_0-\varepsilon) A_1) +  \sum_{i=3}^m \gamma_i A_i
$$
is positive. In fact, we can choose $\varepsilon >0$ small enough to
guarantee that
$$
B_1- (t_0 - \varepsilon)B_2
$$
is also positive.

Now we can use this to write $\calA$ as
\begin{align*}
\calA(X) 
&= A_1 X (B_1-(t_0-\varepsilon)B_2)\\
&\quad \qquad \qquad + \left(\gamma_2 (A_2 + (t_0-\varepsilon) A_1) +
  \sum_{i=3}^m \gamma_i A_i \right) X \frac{B_2}{\gamma_2} \\
&\qquad \qquad \qquad + \sum_{i=3}^m A_i X \left(B_i - \frac{\gamma_i}{\gamma_2}B_2 \right)
\end{align*}

Again, by renaming, we may assume that $\calA$ is of the form
$$
\calA(X)=\sum_{i=1}^m A_i X B_i
$$
with $A_2$, $B_1$ and $B_2$ positive and the rest of the matrices Hermitian.

Now, since $B_1$ is positive, we can choose constants $\beta_i >0$ sufficiently large such that
$$
B_i + \beta_i B_1
$$
is positive for $i=3, \dots, m$. We can then write $\calA$ as
\begin{align*}
\calA(X) 
&= \left(A_1- \sum_{i=3}^m \beta_i A_i \right)X B_1 + 
 A_2 XB_2  \\
& \quad \qquad \qquad + A_3 X (B_3+\beta_3 B_1)+ \dots  + A_m X(B_m+\beta_m B_1),
\end{align*}
and thus $\calA$ has the form
$$
\calA(X) = \sum_{i=1}^m A_i X B_i,
$$
with $A_i$ Hermitian for all $i$, $A_2$ positive and $B_i$ positive
for all $i$. Renaming the matrices we obtain the desired conclusion.
\end{proof}

For the case where there are exactly two summands, we obtain the
result that all inner products come from positive matrices.

\begin{theorem}\label{th:main2}
Let $\calA: \Mn \to \Mn$ be given by 
$$
\calA (X) = A_1 X B_1 + A_2 X B_2
$$ 
with $A_i, B_i \in \Mn$ Hermitian. If $\lip \calA(X), X \rip >0$ for
all nonzero $X \in \Mn$, then there exist positive matrices $A_1',
A_2', B_1', B_2'$ such that
$$
\calA (X) = A_1' X B_1' + A_2' X B_2'.
$$
\end{theorem}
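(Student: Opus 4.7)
The plan is to apply Proposition~\ref{prop:Bpos} to reduce to a situation where three of the four matrices are already positive, and then push the continuity argument one step further. Specifically, by Proposition~\ref{prop:Bpos} in the case $m=2$, we may rewrite $\calA(X)$ so that, after renaming, $A_1$, $B_1$, and $B_2$ are all positive and $A_2$ is Hermitian. So the only matrix left to ``upgrade'' from Hermitian to positive is $A_2$.

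To do this, I would consider the one-parameter family of Hermitian matrices $B_1 - t B_2$ for $t \geq 0$. At $t=0$ this equals $B_1$, which is positive; for $t$ sufficiently large, $-tB_2$ dominates (since $B_2$ is positive) and the matrix has a negative eigenvalue. Continuity of the smallest eigenvalue, together with the Intermediate Value Theorem, then produces a least $t_0 > 0$ with $B_1 - t_0 B_2$ positive semi-definite and possessing a nonzero kernel vector $y_0$. Plugging the nonzero rank-one matrix $X = x y_0^*$ into the positivity hypothesis gives, for every nonzero $x \in \C^n$,
$$
0 < (x^* A_1 x)(y_0^* B_1 y_0) + (x^* A_2 x)(y_0^* B_2 y_0) = (y_0^* B_2 y_0)\, x^*(A_2 + t_0 A_1) x,
$$
where I used $y_0^* B_1 y_0 = t_0 (y_0^* B_2 y_0)$. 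Since $B_2$ is positive and $y_0 \neq 0$, the scalar $y_0^* B_2 y_0$ is strictly positive, and this forces $A_2 + t_0 A_1$ to be a positive matrix.

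Finally, exploiting that the set of positive matrices is open, I would pick $\varepsilon > 0$ small enough so that both $A_2 + (t_0 - \varepsilon) A_1$ and $B_1 - (t_0 - \varepsilon) B_2$ remain positive, and rewrite
$$
\calA(X) = A_1 X \bigl(B_1 - (t_0 - \varepsilon) B_2\bigr) + \bigl(A_2 + (t_0 - \varepsilon) A_1\bigr) X B_2,
$$
which is a direct algebraic check and exhibits $\calA$ as a sum of two terms in which each outer pair of matrices is positive. I do not expect a serious obstacle: this is essentially the continuity argument already used inside the proof of Proposition~\ref{prop:Bpos}, and the reason the $m=2$ case attains the stronger conclusion is precisely that there are no leftover summands $\sum_{i \geq 3} A_i X B_i$ whose subsequent manipulation would reintroduce non-positive matrices, as happens in the general case.
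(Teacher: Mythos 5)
Your proposal is correct and follows essentially the same route as the paper: reduce via Proposition~\ref{prop:Bpos} to the case where $A_1, B_1, B_2$ are positive and $A_2$ is Hermitian, locate the critical $t_0$ for the family $B_1 - tB_2$, test positivity on $X = x y_0^*$ with $y_0$ in the kernel of $B_1 - t_0 B_2$ to conclude $A_2 + t_0 A_1$ is positive, then back off by a small $\varepsilon$ and regroup. This matches the paper's argument step for step.
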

\begin{proof}
By Proposition~\ref{prop:Bpos} we may assume that $\calA$ is of the form
$$
\calA(X) = A_1 X B_1  + A_2 X B_2 
$$
with $B_1$, $B_2$ and $A_1$ positive and $A_2$ Hermitian.

Consider the family of Hermitian matrices $B_1 - t B_2$ for $t \geq
0$. Again, there exists a point $t_0>0$ such that $B_1 - t B_2$ is
positive for all $t \in [0, t_0)$ and $B_1 - t_0 B_2$ has zero as an
eigenvalue with eigenvector $y_0$. As before, 
\begin{align*}
0 < \lip \calA(x y_0^*), x y_0^* \rip 
&= (x^* A_1 x) (y_0^* B_1 y_0) + (x^* A_2 x) (y_0^* B_2 y_0) \\
&=  (x^* A_1  x) (y_0^* (B_1-t_0 B_2) y_0) + (x^*(A_2+t_0 A_1) x) (y_0^* B_2 y_0) \\
&= (x^*(A_2+t_0 A_1) x) (y_0^* B_2y_0) \\
\end{align*}
and hence $A_2+t_0 A_1$ is positive.  By continuity, there
exists $\varepsilon >0$ small enough such that both
$$
B_1 - (t_0 - \varepsilon) B_2 \qquad \text{ and } \qquad  A_2 + (t_0 - \varepsilon) A_1 
$$
are positive.

Thus we can write $\calA$ as
$$
\calA(X) = A_1 X (B_1-(t_0-\varepsilon)B_2) + (A_2+(t_0-\varepsilon)A_1) X B_2 ,
$$
with 
$$
A_1, \quad B_1-(t_0-\varepsilon)B_2, \quad A_2+(t_0-\varepsilon)A_1, \quad \text{ and } \quad \quad B_2
$$ 
all positive.

Thus we conclude that we can write $\calA$ as
$$
\calA(X) = A_1 X B_1  + A_2 X B_2 
$$
with $A_1$, $B_1$, $A_2$ and $B_2$ all positive.
\end{proof}

We would like to extend the theorem above for an arbitrary number of
summands in $\calA$. Nevertheless, this is not possible, as the
following example shows. 

\subsection{Counterexample}\label{subsec:2.1}

The following example is a restatement of the example shown to us by
D.~Grinberg in~\cite{grinberg}.

\begin{example}\label{example}
Let $\varepsilon \in (0,\frac{1}{2})$ and let $\calA : M_2 (\C) \to M_2(\C)$ be given by
$$
\calA \begin{pmatrix} x & y \\ w & z \end{pmatrix}=\begin{pmatrix} x +
  (1-\varepsilon) z & \varepsilon y \\ \varepsilon w & z +
  (1-\varepsilon) x \end{pmatrix}.
$$
Then,
\begin{itemize}
\item $\lip \calA(X), X \rip >0$ for all nonzero $X \in M_2(\C)$.
\item There do not exist $m \in \N$ and positive matrices $A_i, B_i
  \in M_2(\C)$ for $i=1, 2,
\dots, m$ such that
$$
\calA(X) = \sum_{i=1}^m A_i X B_i
$$
for all $X \in M_2(\C)$.
\end{itemize}
\end{example}
\begin{proof}
To prove the first part, just observe that if 
$$ 
X=\begin{pmatrix} x & y \\ w & z \end{pmatrix},
$$
then 
$$
\lip  \calA(X), X\rip = \varepsilon (|x|^2+|y|^2+|w|^2+|z|^2) +
(1-\varepsilon) |x +z|^2
$$
and hence $\lip \calA(X), X \rip >0$ for all nonzero matrices $X \in M_2(\C)$.

For the second part, assume that there exist $m \in \N$ and positive matrices 
$$
A_i=\begin{pmatrix}
\alpha_i & \gamma_i \\
\conj{\gamma_i} & \beta_i
\end{pmatrix} \quad \text{ and } \quad B_i=\begin{pmatrix}
\alpha_i' & \gamma_i' \\
\conj{\gamma_i'} & \beta_i'
\end{pmatrix}
$$
for $i=1, 2, \dots, m$ such that
$$
\calA(X)=\sum_{i=1}^m A_i X B_i.
$$
Since each $A_i$ and $B_i$ are positive, it follows that $\alpha_i$,
$\alpha_i'$, $\beta_i$ and $\beta_i'$ are all positive numbers. Also,
$\alpha_i \beta_i > |\gamma_i|^2$ and $\alpha_i' \beta_i' >
|\gamma_i'|^2$.

Now, we have
$$
\calA \begin{pmatrix} 0 & 1 \\ 0 & 0 \end{pmatrix} = \begin{pmatrix} 0
  & \varepsilon \\ 0 & 0 \end{pmatrix}, \qquad \text{ and } \quad
\calA \begin{pmatrix} 0 & 0 \\ 1 & 0 \end{pmatrix} = \begin{pmatrix} 0
  & 0 \\ \varepsilon & 0 \end{pmatrix}.
$$

But the assumption implies that
$$
\calA \begin{pmatrix} 0 & 1 \\ 0 & 0 \end{pmatrix} = \begin{pmatrix}
\sum_{i=1}^m \alpha_i \conj{\gamma_i'} & \sum_{i=1}^m \alpha_i \beta_i' \\
\sum_{i=1}^m \conj{\gamma_i}\conj{\gamma_i'} & \sum_{i=1}^m \conj{\gamma_i} \beta_i'
\end{pmatrix}
$$
and
$$
\calA \begin{pmatrix} 0 & 0 \\ 1 & 0 \end{pmatrix} = \begin{pmatrix} 
\sum_{i=1}^m \gamma_i \alpha_i' & \sum_{i=1}^m \gamma_i \gamma_i' \\
\sum_{i=1}^m \beta_i \alpha_i' & \sum_{i=1}^m \beta_i \gamma_i'
\end{pmatrix}.
$$
Hence, 
$$
\varepsilon= \sum_{i=1}^m \alpha_i \beta_i'= \sum_{i=1}^m \beta_i \alpha_i'.
$$
Thus
$$
\varepsilon = \sum_{i=1}^m \frac{\alpha_i \beta_i' + \beta_i
  \alpha_i'}{2} \geq \sum_{i=1}^m \sqrt{\alpha_i \beta_i' \beta_i
  \alpha_i'} \geq \sum_{i=1}^m | \gamma_i | \, |\gamma_i'| \geq \left|
  \sum_{i=1}^m  \gamma_i \conj{\gamma_i'} \right|
$$
by the arithmetic mean--geometric mean inequality, due to the positiviy of
$\alpha_i$, $\alpha_i'$, $\beta_i$ and $\beta_i'$ and the inequalities
$\alpha_i \beta_i > |\gamma_i|^2$ and $\alpha_i' \beta_i' > |\gamma_i'|^2$.

But observe that on one hand
$$
\calA \begin{pmatrix} 0 & 0 \\ 0 & 1 \end{pmatrix} = \begin{pmatrix} 1-\varepsilon
  & 0 \\ 0 & 1 \end{pmatrix}
$$
and on the other
$$
\calA \begin{pmatrix} 0 & 0 \\ 0 & 1 \end{pmatrix} = \begin{pmatrix} 
\sum_{i=1}^m \gamma_i \conj{\gamma_i'} & \sum_{i=1}^m \gamma_i \beta_i' \\
\sum_{i=1}^m \beta_i \conj{\gamma_i'} & \sum_{i=1}^m \beta_i \beta_i'
\end{pmatrix},
$$
and hence $\sum_{i=1}^m \gamma_i \conj{\gamma_i'}= 1 - \varepsilon$. 

The previous inequality implies that 
$$
\varepsilon \geq \left| \sum_{i=1}^m  \gamma_i \conj{\gamma_i'} \right| = 1 - \varepsilon
$$
which implies that $\varepsilon \geq \frac{1}{2}$, contradicting the hypothesis.
\end{proof}

Observe that the map $\calA$ in the example above can be written as
\begin{align*}
\calA(X) 
&= \begin{pmatrix} 1 & 0 \\ 0 & \varepsilon \end{pmatrix} X
\begin{pmatrix} 1 & 0 \\ 0 & 0 \end{pmatrix} +
\begin{pmatrix} 0 & 1-\varepsilon \\ 0 & 0 \end{pmatrix} X
\begin{pmatrix} 0 & 0 \\ 1 & 0 \end{pmatrix}\\
&+
\begin{pmatrix} 0 & 0 \\ 1-\varepsilon & 0 \end{pmatrix} X
\begin{pmatrix} 0 & 1 \\ 0 & 0 \end{pmatrix} +
\begin{pmatrix} \varepsilon & 0 \\ 0 & 1 \end{pmatrix} X
\begin{pmatrix} 0 & 0 \\ 0 & 1 \end{pmatrix},
\end{align*}
and it is not hard to show that it cannot be written as the sum of
less summands.

\subsection{Three or more summands}

As the previous example shows, Theorems~\ref{th:main1}
and~\ref{th:main2} cannot be extended to $four$ or more
summands. Nevertheless, something can be said.

\begin{theorem}\label{th:3sum}
Let $m \in \N$ and $m \geq 2$. Let $\calA: \Mn \to \Mn$ be given by 
$$
\calA (X) = \sum_{i=1}^m A_i X B_i 
$$ 
with $A_i, B_i \in \Mn$ Hermitian. If $\lip \calA(X), X \rip >0$ for
all $X \in \Mn$, $X \neq 0$, then there exist positive matrices $A_i',
B_i'$ such that
$$
\calA (X) = - A_1' X B_1' + \sum_{i=2}^m A_i' X B_i',
$$
for all $X \in \Mn$.
\end{theorem}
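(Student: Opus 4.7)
The plan is to build on Proposition~\ref{prop:Bpos} to reduce the task to a specific normal form, and then to introduce a single minus sign via a shifting argument driven by the positivity of $A_1$. By Proposition~\ref{prop:Bpos}, after relabeling I may assume without loss of generality that
$$
\calA(X) = A_1 X B_1 + \sum_{i=2}^m A_i X B_i
$$
with $A_1, B_1, B_2, \ldots, B_m$ all positive and $A_2, \ldots, A_m$ merely Hermitian. The remaining task is thus to make $A_2, \ldots, A_m$ positive, which should force exactly one minus sign to appear.

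The core trick is the substitution $A_i = (A_i + \beta_i A_1) - \beta_i A_1$ for $i \geq 2$. Since $A_1$ is positive and $A_i$ is Hermitian, one can pick $\beta_i > 0$ large enough that $A_i + \beta_i A_1$ is positive. After this substitution, one obtains
$$
\calA(X) = A_1 X B_1 + \sum_{i=2}^m (A_i + \beta_i A_1) X B_i - A_1 X \Bigl(\sum_{i=2}^m \beta_i B_i \Bigr).
$$
Setting $B' := \sum_{i=2}^m \beta_i B_i$, the first and last terms combine into $-A_1 X (B' - B_1)$, while the middle $m-1$ terms already have positive matrices on both sides of $X$. The entire scheme therefore succeeds provided $B' - B_1$ can be arranged to be positive.

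This final point is the main obstacle, and the remedy uses the hypothesis $m \geq 2$: I will additionally require $\beta_2$ to be large enough so that $\beta_2 B_2 - B_1$ is positive. This is possible because $B_2$ is positive (hence bounded below by a positive multiple of the identity) while $B_1$ is bounded above in operator norm, so for sufficiently large $\beta_2$ the matrix $\beta_2 B_2$ dominates $B_1$. Combined with the positivity of the remaining $\beta_i B_i$ for $i \geq 3$, the decomposition $B' - B_1 = (\beta_2 B_2 - B_1) + \sum_{i=3}^m \beta_i B_i$ exhibits $B' - B_1$ as a sum of positive matrices, hence positive. Setting $A_1' := A_1$, $B_1' := B' - B_1$, and $A_i' := A_i + \beta_i A_1$, $B_i' := B_i$ for $2 \leq i \leq m$ then yields the required decomposition
$$
\calA(X) = - A_1' X B_1' + \sum_{i=2}^m A_i' X B_i',
$$
with all matrices positive and exactly $m$ summands.
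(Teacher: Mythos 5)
Your proof is correct, and it rests on the same pillars as the paper's: reduce via Proposition~\ref{prop:Bpos} to the normal form in which $A_1$ and all the $B_i$ are positive and $A_2,\dots,A_m$ are Hermitian, then make the Hermitian factors positive by adding large multiples of a positive matrix, collecting the compensation into a single negative summand. The difference is in the bookkeeping of that single summand. The paper works in two stages: it first picks $\alpha>0$ with $\alpha A_1-A_2$ positive and rewrites $\calA(X)=A_1X(B_1+\alpha B_2)-(\alpha A_1-A_2)XB_2+\sum_{i\geq 3}A_iXB_i$, so the minus sign attaches to a matrix built from $A_2$; it then absorbs $A_3,\dots,A_m$ by adding large multiples of that new positive matrix, which enlarges the $B$-factor of the negative term. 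You instead shift every $A_i$ ($i\geq 2$) by $\beta_iA_1$ in one uniform pass, so the minus sign stays on $A_1$ itself with $B$-factor $\sum_{i\geq 2}\beta_iB_i-B_1$, whose positivity you secure by taking $\beta_2$ large enough that $\beta_2B_2-B_1$ is positive; this is where $m\geq 2$ enters, and the extra requirement on $\beta_2$ is compatible with the earlier one since enlarging $\beta_2$ only adds more of the positive matrix $A_1$ to $A_2+\beta_2A_1$. Both arguments are elementary and yield $m$ summands; yours is slightly more streamlined (one round of constants, minus sign on the untouched $A_1$), while the paper's produces a negative term with $A$-factor $\alpha A_1-A_2$ instead.
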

\begin{proof}
By Proposition~\ref{prop:Bpos} we may assume that $\calA$ has the form 
$$
\calA (X) = \sum_{i=1}^m A_i X B_i 
$$
with $A_i$ Hermitian, $A_1$ positive, and $B_i$ positive, for each $i=1, 2, \dots,
m$.

Since $A_1$ is positive, we can choose $\alpha >0$ large enough such
that $\alpha A_1 - A_2$ is positive. Hence, we can write $\calA$ as
$$
\calA(X)= A_1 X (B_1 +\alpha B_2) - (\alpha A_1 - A_2)X B_2 + \sum_{i=3}^m A_i X B_i
$$
and hence we may assume that $\calA$ can be written as
$$
\calA(X) = A_1 X B_1 - A_2 X B_2 + \sum_{i=3}^m A_i X B_i
$$
with $B_i$ positive for each $i=1, 2, \dots, m$, with $A_1$ and $A_2$
positive, and $A_i$ Hermitian for $i=3, 4, \dots, m$. 

For $k=3, 4, \dots, m$, we can choose $\eta_k >0$ large enough such
that $A_k + \eta_k A_2$ is positive. Hence $\calA$ can be written as
$$
\calA(X) = A_1 X B_1 - A_2 X \left(B_2 +  \sum_{k=3}^m \eta_k B_k \right)
+ \sum_{k=3}^m (A_k + \eta_k A_2) X B_k
$$
and thus, by renaming, we have
$$
\calA(X) = -A_1' X B_1' + \sum_{i=2}^m A_i' X B_i'
$$
with $A_i'$ and $B_i'$ positive for $i=1, 2, \dots, m$.
\end{proof}

The previous theorem can be improved under certain conditions.

\begin{theorem}\label{th:main_pos}
Let $m \in \N$ and $m \geq 2$. Let $\calA: \Mn \to \Mn$ be given by 
$$
\calA (X) = - A_1 X B_1 + \sum_{i=2}^m A_i X B_i 
$$ 
with $A_i, B_i \in \Mn$ positive. If there exist positive numbers
$\xi_k$ for $k=2, 3, \dots, m$ such that 
$$
-A_1 + \sum_{k=2}^m \xi_k A_k
$$
is positive and $B_k-\xi_kB_1$ is positive for all $k=2, 3, \dots, m$ 
then there exist positive matrices $A_i', B_i'$ such that
$$
\calA (X) = \sum_{i=1}^m A_i' X B_i',
$$
for all $X \in \Mn$.
\end{theorem}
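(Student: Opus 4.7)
The plan is to exploit the hypotheses to perform an algebraic rearrangement of the defining sum for $\calA$. The key observation is that for each $k \in \{2, 3, \dots, m\}$, we can write $B_k = (B_k - \xi_k B_1) + \xi_k B_1$ and distribute across the sum. This is essentially the reverse of the trick used in the proof of Theorem~\ref{th:3sum}, where positive multiples of $B_1$ were added to absorb sign discrepancies.

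First I would substitute the identity $B_k = (B_k - \xi_k B_1) + \xi_k B_1$ into the expression for $\calA$, obtaining
\begin{align*}
\calA(X) &= -A_1 X B_1 + \sum_{k=2}^m A_k X (B_k - \xi_k B_1) + \sum_{k=2}^m \xi_k A_k X B_1 \\
&= \left(-A_1 + \sum_{k=2}^m \xi_k A_k\right) X B_1 + \sum_{k=2}^m A_k X (B_k - \xi_k B_1).
\end{align*}
Here the $-A_1 X B_1$ term has been combined with the new $\xi_k A_k X B_1$ terms into a single summand.

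Now I would simply read off the desired positive matrices. Set $A_1' := -A_1 + \sum_{k=2}^m \xi_k A_k$ and $B_1' := B_1$; by the first hypothesis $A_1'$ is positive, and $B_1'$ is positive by assumption on $\calA$. For $k \geq 2$, set $A_k' := A_k$ and $B_k' := B_k - \xi_k B_1$; here $A_k'$ is positive by assumption and $B_k'$ is positive by the second hypothesis. Then $\calA(X) = \sum_{i=1}^m A_i' X B_i'$ with all matrices positive, which is the conclusion.

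There is really no significant obstacle here: the entire proof is a one-line rearrangement engineered precisely so that the two hypotheses on the $\xi_k$'s exactly cover the two families of coefficient matrices produced by the rewriting. The only point worth double-checking is that we have not lost any terms in the bookkeeping and that $B_1'$ indeed inherits positivity from the original $B_1$, which it does trivially.
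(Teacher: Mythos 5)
Your proposal is correct and is essentially identical to the paper's own proof: both rewrite $\calA(X)$ as $\bigl(-A_1+\sum_{k=2}^m \xi_k A_k\bigr)XB_1+\sum_{k=2}^m A_k X(B_k-\xi_k B_1)$ and read off the positivity from the hypotheses. Nothing further is needed.
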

\begin{proof}
If the hypothesis hold, we can write $\calA$ as
$$
\calA(X) = \left( -A_1 + \sum_{k=2}^m \xi_k A_k \right) X B_1 +
\sum_{k=2}^m A_k X (B_k - \xi_k B_1),
$$
which, by renaming, is the desired conclusion.
\end{proof}

Perhaps it should be noted that one can use the theorems above to
obtain, in explicit form, appropriate expressions for $\calA$, at
least in simple cases.

For example, if we define $\calA : M_2(\C) \to M_2(\C)$ as in
Example~\ref{example}, one can use the procedure in
Theorem~\ref{th:hermitian} to show that $\calA$ can be written as
\begin{align*}
\calA(X)
&= \begin{pmatrix} 1 & 0 \\ 0 & \varepsilon \\ \end{pmatrix} X
\begin{pmatrix} 1 & 0 \\ 0 & 0 \\ \end{pmatrix}
+ (1-\varepsilon)
\begin{pmatrix} 0 & \frac{1-i}{2} \\ \frac{1+i}{2} & 0 \\ \end{pmatrix} X
\begin{pmatrix} 0 & \frac{1-i}{2} \\ \frac{1+i}{2} & 0 \\ \end{pmatrix}\\
& \qquad \hbox{ } \\
& \qquad + (1-\varepsilon)
\begin{pmatrix} 0 & \frac{1+i}{2} \\ \frac{1-i}{2} & 0 \\ \end{pmatrix} X
\begin{pmatrix} 0 & \frac{1+i}{2} \\ \frac{1-i}{2} & 0 \\ \end{pmatrix}
+ \begin{pmatrix} \varepsilon & 0 \\ 0 & 1 \\ \end{pmatrix} X 
\begin{pmatrix} 0 & 0 \\ 0 & 1 \\ \end{pmatrix}.
\end{align*}

Also, we can apply the procedure of Proposition~\ref{prop:Bpos} and of
Theorem~\ref{th:3sum}, for the case $\varepsilon=\frac{1}{4}$, to write $\calA$ as
\begin{align*}
\calA(X)
&= - \frac{1}{32}\begin{pmatrix}1&2\\ 2& 16 \end{pmatrix}  X
\begin{pmatrix}279&48\\48&36\end{pmatrix} 
+ \frac{1}{32}\begin{pmatrix}3&9-i\\ 9+i&56\end{pmatrix} X
\begin{pmatrix}31&6-6i\\6+6i &4\end{pmatrix} \\
& \quad + \frac{1}{32}\begin{pmatrix}3&9+i\\ 9-i&56\end{pmatrix} X
\begin{pmatrix}31&6+6i\\6-6i&4\end{pmatrix}
+ \frac{1}{32}\begin{pmatrix}1&0\\0&8\end{pmatrix} X
\begin{pmatrix}125&12\\12&20\end{pmatrix}.
\end{align*}

\section{Further questions}

We believe the following questions are worth further inquiry:
\begin{itemize}
\item Can Theorem~\ref{th:main2} be extended to $3$ summands?
\item We know that Theorem~\ref{th:main2} cannot be extended to $4$
  summands. For what $m$ can the theorem be extended?  
\item For what dimensions is Theorem \ref{th:main2} true for all
  number of summands?
\item Theorem~\ref{th:main_pos} gives conditions under which
  Theorem~\ref{th:main2} can be extended. Nevertheless, the hypotheses
  are not simple to check. Is there an ``easy-to-compute'' condition on
  the matrices $A_i$ and $B_i$ that guarantees that
  Theorem~\ref{th:main2} can be extended to any number of summands?
\item Is there an ``easy-to-compute'' necessary and sufficient condition on $\calA$ that
  guarantees that it can be written as a sum of the form
$$
\calA(X)=\sum_{i=1}^m A_i X B_i
$$
with $A_i$ and $B_i$ positive?
\end{itemize}


\end{document}